\newcommand{\be}{\begin{equation}}
\newcommand{\ee}{\end{equation}}
\newcommand{\bex}{\begin{equation*}}
\newcommand{\eex}{\end{equation*}}
\newcommand{\bea}{\begin{eqnarray}}
\newcommand{\eea}{\end{eqnarray}}
\newcommand{\beas}{\begin{eqnarray*}}
\newcommand{\eeas}{\end{eqnarray*}}
\newtheorem{theorem}{Theorem}[section]
\newtheorem{definition}[theorem]{Definition}
\newtheorem{proposition}[theorem]{Proposition}
\newtheorem{lemma}[theorem]{Lemma}
\newtheorem{remark}[theorem]{Remark}
\newtheorem{assumption}{Assumption}[section]
\newproof{pf}{Proof}
\newenvironment{breakablealgorithm}
  {
   \begin{center}
     \refstepcounter{algorithm}
     \hrule height.8pt depth0pt \kern2pt
     \renewcommand{\caption}[2][\relax]{
       {\raggedright\textbf{\ALG@name~\thealgorithm} ##2\par}%
       \ifx\relax##1\relax 
         \addcontentsline{loa}{algorithm}{\protect\numberline{\thealgorithm}##2}%
       \else 
         \addcontentsline{loa}{algorithm}{\protect\numberline{\thealgorithm}##1}%
       \fi
       \kern2pt\hrule\kern2pt
     }
  }{
     \kern2pt\hrule\relax
   \end{center}
  }
\journal{journal}
\begin{document}

\begin{frontmatter}



\title{Inverse reinforcement learning by expert imitation for the stochastic linear-quadratic optimal control problem}


\author[1]{Zhongshi Sun}
\ead{stone.sun@mail.sdu.edu.cn}

\author[1]{Guangyan Jia\corref{cor1}}
\ead{jiagy@sdu.edu.cn}

\cortext[cor1]{Corresponding author}

\affiliation[1]{organization={Zhongtai Securities Institute for Financial Studies, Shandong University},
            city={Jinan},
            postcode={250100}, 
            state={Shandong},
            country={China}}
\begin{abstract}
This article studies inverse reinforcement learning (IRL) for the stochastic linear-quadratic optimal control problem, where two agents are considered. A learner agent does not know the expert agent's performance cost function, but it imitates the behavior of the expert agent by constructing an underlying cost function that obtains the same optimal feedback control as the expert's. We first develop a model-based IRL algorithm, which consists of a policy correction and a policy update from the policy iteration in reinforcement learning, as well as a cost function weight reconstruction based on the inverse optimal control. Then, under this scheme, we propose a model-free off-policy IRL algorithm, which does not need to know or identify the system and only needs to collect the behavior data of the expert agent and learner agent once during the iteration process. Moreover, the proofs of the algorithm's convergence, stability, and non-unique solutions are given. Finally, a simulation example is provided to verify the effectiveness of the proposed algorithm.
\end{abstract}



\begin{keyword}
  Inverse reinforcement learning \sep stochastic linear-quadratic problem  \sep Inverse optimal control \sep reinforcement learning.


\end{keyword}

\end{frontmatter}


\section{Introduction}
\label{sec:intro}
In recent years, significant progress has been made in solving optimal control problems with unknown models using reinforcement learning (RL) \citep{bertsekas1996neuro,sutton2018reinforcement} and approximate/adaptive dynamic programming (ADP) \citep{lewis2013reinforcement,jiang2017robust}. These methods learn optimal policies directly through interaction with the environment, without the need for model parameter information, opening up new frontiers in control theory. For deterministic systems, an ADP algorithm was proposed by \citet{vrabie2009adaptive} for solving a continuous-time linear quadratic optimal control problem (LQ problem) without knowing information on internal system dynamics. \citet{jiang2012computational} presented a novel policy iteration (PI) method for the continuous-time LQ problem with completely unknown system dynamics. Afterward, the robustness property of PI in RL was discussed in \cite{pang2021robust,cui2023lyapunov}. Moreover, more RL algorithms for optimal control problems for linear/nonlinear/continuous-time/discrete-time systems can be found in \citep{liu2020adaptive,jiang2020learning,wang2023recent}. On the other hand, for stochastic systems, \citet{bian2016adaptive} presented ADP and robust ADP algorithms for a class of continuous-time stochastic systems subject to multiplicative noise. \citet{pang2022reinforcement} developed a PI-based off-policy RL algorithm for the adaptive optimal stationary control of continuous-time linear stochastic systems with both additive and multiplicative noises. \citet{wang2020reinforcement} proposed an entropy-regularized, exploratory control framework to investigate the exploration-exploitation tradeoff for RL in the stochastic LQ case. \citet{li2022stochastic} presented an online RL algorithm to solve the stochastic LQ problem with partial system information, and \citet{zhang2023adaptive} extended it to an ADP-based completely model-free PI algorithm. 

The advancement of RL/ADP has subsequently given rise to the exploration of inverse problems, specifically inverse reinforcement learning (IRL) \citep{ng2000algorithms,abbeel2004apprenticeship} and inverse optimal control (IOC) \citep{bellman1963inverse,ab2020inverse}. These fields address the challenge of uncovering the underlying objective functions or costs that govern observed optimal behaviors. While the traditional RL method focuses on deriving an optimal policy with a predefined reward function, IRL and IOC study the inverse problem, determining what the reward or cost function might be based on observations of the expert agent's performance and assuming that such behavior is optimal. Numerous applications of IRL theory have been found in many areas, such as autonomous driving \citep{you2019advanced}, anomaly detection \citep{oh2019sequential}, energy management \citep{tang2022multi}, and so on.

Exploration into IRL and IOC has been progressively growing, with a variety of methods being proposed to tackle these inverse problems \citep{lian2024integral}. A novel IRL algorithm was presented by \citet{xue2021inverse} for learning an unknown performance objective function for tracking control. \citet{lian2022data} developed a data-driven IRL control algorithm for nonzero-sum multiplayer games in linear continuous-time differential dynamical systems. \citet{self2022model} proposed an online data-driven model-based IRL technique for linear and nonlinear deterministic systems. There are also many IRL researches in \citep{lian2022inverse,donge2022multi,lian2023off,martirosyan2023inverse,wu2023human,lian2021online,xue2023data,zhang2024inverse}. However, the methods mentioned above are designed for deterministic systems, and limited work has been done for stochastic systems. \citet{karg2022inverse} defined and solved the stochastic IOC problem of the discrete-time LQ Gaussian and the LQ sensorimotor control model. \citet{li2023inverse} studied the stochastic IOC problem for discrete-time LQ Gaussian
tracking control. \citet{nakano2023inverse} discussed an inverse problem of the stochastic optimal control of general diffusions with the performance index having the quadratic penalty term of the control process.

In this article, we propose a novel IRL approach for the continuous-time stochastic LQ optimal control problem that learns both underlying cost weight and the corresponding optimal control policy. Compared with the IRL algorithm in \cite{xue2021inverse} for linear deterministic systems, this article considers a class of linear It\^{o} stochastic systems with state and control-dependent diffusion terms. Firstly, we consider a target expert agent and a learner agent and present a model-based IRL method for the learner to imitate the target expert agent's behaviors by constructing an unknowing cost function that obtains the expert's feedback control. This algorithm consists of a policy correction, a policy update using the standard RL PI, and a cost function weight construction using the standard IOC. Secondly, rigorous convergence proof and analysis of stability of this model-based IRL algorithm are provided. Moreover, since the cost weights that can generate the same control policy in IRL may not be unique, we characterize the set of all possible solutions. Then, we develop a model-free off-policy IRL algorithm based on the model-based algorithm. This IRL algorithm does not need to know or identify the system and only needs to collect the behavior data of the target expert agent and learner agent once during the iteration process. Finally, we conduct a numerical experiment on a 2-dimensional linear system based on the proposed model-free algorithm. The simulation results show that the algorithm can converge to the optimal solution well.

The rest of the paper is organized as follows. In Section 2, we briefly describe target optimal control, learner dynamics, and IRL control problems. Section 3 presents a model-based IRL algorithm for the stochastic LQ problem and analyzes the convergence, stability, and nonuniqueness of the algorithm. Then, we develop a model-free off-policy IRL algorithm for the stochastic LQ problem in Section 4. In Section 5, we implement the model-free algorithm and present a simulation result.

Notations: Let $\mathbb{R}^{n\times m}$ be the set of all $n \times m$ real matrices. Let $\mathbb{R}^{n}$ be the $n$-dimensional Euclidean space. For a matrix $X \in \mathbb{R}^{n \times m}$, $vec(X)$ = $\begin{aligned}\begin{bmatrix} x_{1}^{\top}, x_{2}^{\top}, \cdots x_{m}^{\top}\end{bmatrix}^{\top}\end{aligned}$, where $x_{i}^{\top} \in \mathbb{R}^{n}$ is the $i$-th column of $X$. $\otimes$ indicates the Kronecker product. $\mathbb{S}^{n}$ is the set of all symmetric matrices in $\mathbb{R}^{n \times n}$. $\Vert \cdot \Vert$ is the matrix norm in $\mathbb{R}^{n\times m}$. $I_{n}$ denotes the $n$-dimensional identity matrix.

\section{Problem formulation and some preliminaries}

In this section, we consider two dynamic agents: the target expert agent and the learner agent. The former shows optimal examples related to expert performance cost functions. The latter attempts to learn the underlying cost function from expert examples and imitate its behavior. The learner agent can only observe the control actions and state behaviors of the target agent but does not know the information about its system dynamics and performance cost function.

\subsection{Target optimal control}
Let $(\Omega, \mathcal{F}, \mathbb{F}, \mathbb{P})$ be a complete filtered probability space on which a one-dimensional standard Brownian motion $W(\cdot)$ is defined, with $\mathbb{F}=\left\{\mathcal{F}_t\right\}_{t \geqslant 0}$ being its natural filtration augmented by all the $\mathbb{P}-$null sets in $\mathcal{F}$. Consider a target expert agent whose trajectory is generated by the following time-invariant stochastic linear dynamical control system:
\be
\label{sde}
\left\{\begin{array}{ll}
dX_{T}(s) = \left[AX_{T}(s)+Bu_{T}(s)\right]d s + \left[CX_{T}(s)+Du_{T}(s)\right]d W(s), \\
X_{T}(t)=x,
\end{array}\right.
\ee
where $X_{T}\left(\cdot\right) \in \mathbb{R}^{n}$ is the target state, and $u_{T}(\cdot) \in \mathbb{R}^{m}$ is the target control. The coefficients $A$, $C \in \mathbb{R}^{n \times n}$, and $B$, $D \in \mathbb{R}^{n \times m}$ are constant matrix. Let $\mathbb{H}$ be a Euclidean space, and we define the following space:
$L_{\mathbb{F}}^{2}(\mathbb{H})=\left\{\varphi:[0, \infty) \times \Omega \rightarrow \mathbb{H} \mid \varphi(\cdot)\right.$ is an $\mathbb{F}$-progressively measurable process, and $\mathbb{E} \int_{0}^{t}|\varphi(s)|^{2} d s<\infty$ for each $t \ge 0\left.\right\}$.

Clearly, the state equation \eqref{sde} admits a unique solution $X_{T}$ for any control $u_{T} \in L_{\mathbb{F}}^{2}(\mathbb{R}^{m})$. 

In addition, we consider a quadratic target performance cost functional given by
\be
\label{per_func}
J_{T}\left(t, x; u(\cdot)\right)=\mathbb{E}^{\mathcal{F}_{t}}\int_t^{\infty}\left(X_{T}(s)^{\top} Q_{T} X_{T}(s)+u_{T}(s)^{\top} R_{T} u_{T}(s)\right) \mathrm{d} s ,
\ee
where $Q_{T} =Q_{T}^{\top} > 0 $ and $R_{T} =R_{T}^{\top} > 0$ are given constant matrices of proper sizes.

In order to avoid the cost functional \eqref{per_func} tending to positive infinity, we give the following definition of mean-square stabilizability and assumption.

\begin{definition}
The system \eqref{sde} is called mean-square stabilizable if there exists a constant matrix $K \in \mathbb{R}^{m \times n}$ such that the solution of
\be
\left\{\begin{array}{ll}
dX_{T}(s) = (A+BK)X_{T}(s)d s + (C+DK)X_{T}(s)d W(s), \\
X_{T}(t)=x,
\end{array}\right.
\ee
satisfying $\lim_{s \rightarrow \infty} \mathbb{E}[X_{T}(s)^{\top}X_{T}(s)] =0$. In this case, $K$ is called a stabilizer of the system \eqref{sde}, and the feedback control $u_{T}(\cdot) = KX_{T}(\cdot)$ is called stabilizing.
\end{definition}

\begin{assumption}
\label{ass_1}
System \eqref{sde} is mean-square stabilizable.
\end{assumption}

Under Assumption \eqref{ass_1}, we define the sets of admissible controls as
\bex
\mathcal{U}_{a d}=\left\{u(\cdot) \in L_{\mathbb{F}}^2\left(\mathbb{R}^{m}\right) \mid u(\cdot) \text { is stabilizing }\right\} \text {. }
\eex

In the stochastic LQ problem, the target control $u_{T}$ seeks to minimize \eqref{per_func}. The goal is to find an optimal target control $u_{T}^{\ast}(\cdot) \in \mathcal{U}_{a d}$ such that:
\be
\label{value}
J_{T}(t, x ; u_{T}^{\ast}(\cdot))=\inf _{u_{T}(\cdot) \in L_{\mathbb{F}}^{2}(\mathbb{R}^{m})} J_{T}(t, x ; u_{T}(\cdot)) \triangleq V_{T}(t, x).
\ee

We call $V_{T}(t,x)$ the target value function of the stochastic LQ problem. The optimal target control $u_{T}^{\ast}$ is a feedback control scheme of the following form:
\be
\label{opti_con}
u_{T}^{\ast}(s) = K_{T}X(s) = -\left(R_{T}+D^{\top}P_{T}D\right)^{-1}\left(B^{\top}P_{T} + D^{\top}P_{T}C\right)X(s),
\ee
and target value function $V_{T}(t,x) = x^{\top}P_{T}x$, where $P_{T} =P_{T}^{\top} > 0 $ is the unique solution to the following target stochastic algebraic Riccati equation (SARE) \citep{sun2020stochastic}:
\be
\label{ricc}
P_{T} A+A^{\top} P_{T}+C^{\top} P_{T} C+Q_{T}-\left[P_{T} B+C^{\top} P_{T} D\right]\left[R_{T}+D^{\top} P_{T} D\right]^{-1}\left[B^{\top} P_{T}+D^{\top} P_{T} C\right]=0.
\ee

\subsection{Learner dynamics and IRL control problem}
 The learner trajectory is generated by the following time-invariant stochastic linear dynamical control system:
\be
\label{sdel}
\left\{\begin{array}{ll}
dX(s) = \left[AX(s)+Bu(s)\right]d s + \left[CX(s)+Du(s)\right]d W(s), \\
X(t)=x,
\end{array}\right.
\ee
where $X\left(\cdot\right) \in \mathbb{R}^{n}$ is the learner's state, and $u(\cdot) \in \mathbb{R}^{m}$ is the learner's control.

Let us introduce the stochastic LQ problem of the learner \eqref{sdel} with an arbitrary given cost functional
\be
\label{per_funcl}
J\left(t, x; u(\cdot)\right)=\mathbb{E}^{\mathcal{F}_{t}}\int_t^{\infty}\left(X(s)^{\top} Q X(s)+u(s)^{\top} R u(s)\right) \mathrm{d} s ,
\ee
where $Q =Q^{\top} > 0 $ and $R =R^{\top} > 0$ are given constant matrices of proper sizes. The optimal input $u$ is given by
\be
\label{opti_conl}
u(s) = KX(s) = -\left(R+D^{\top}PD\right)^{-1}\left(B^{\top}P + D^{\top}PC\right)X(s)
\ee
where $P =P^{\top} > 0 $ is the solution of the learner SARE :

\be
\label{riccl}
P A+A^{\top} P+C^{\top} P C+Q-\left[P B+C^{\top} P D\right]\left[R+D^{\top} P D\right]^{-1}\left[B^{\top} P+D^{\top} P C\right]=0.
\ee

We now give two assumptions and a definition for discussing the IRL problem.
\begin{assumption}
The weights $Q_{T}$ and $R_{T}$ in the target performance cost functional $J_{T}$ \eqref{per_func}, the optimal gain matrix $K_{T}$ in \eqref{opti_con}, and SARE solution $P_{T}$ are unknown to learner \eqref{sdel}.
\end{assumption}

\begin{assumption}
The learner knows the target state data $X_{T}$ and control data $u_{T}$.
\end{assumption}

\begin{definition}[\citep{lian2024integral}]
Given an $R \in \mathbb{R}^{m \times m}>0$, if there exists a $Q\in \mathbb{R}^{n \times n} \geq 0$ in \eqref{riccl}, such that $K = K_{T}$, then $Q$ is called an equivalent weight to $Q_{T}$.
\end{definition}

Then, based on the IRL work \citep{xue2021inversediscrete, xue2021inverse}, we give the following definition of the IRL for the stochastic LQ problem.
\begin{definition}[IRL for stochastic LQ problem]
\label{df_IRL}
By using the collected data of the expert \eqref{sde} and the learner \eqref{sdel}, the learner desires to design an IRL problem that holds the following:
\begin{enumerate}[(i)]
\item Reconstruct the unknown performance cost functional \eqref{per_func} to exhibit the same control actions $u_{T}^{\ast}$ in \eqref{opti_con} and states $x_{T}$ as the expert \eqref{sde}, i.e., find an equivalent weight $Q$ to $Q_{T}$ for any $R \in \mathbb{R}^{m \times m}>0$.
\item Make sure that each computed cost function in the whole iteration process stabilizes the learner agent \eqref{sdel}.
\end{enumerate}
\end{definition}

\section{Model-based IRL framework for the stochastic LQ problem}
In this section, we propose a novel model-based IRL approach for the learner agent \eqref{sdel} to determine an equivalent weight to $Q_{T}$ by using optimal control and IOC methods. This approach requires knowledge of the system dynamics, and in Section 4, we will propose a model-free algorithm to remove this requirement.

\subsection{IRL policy iteration}

First, we present a theorem to show the conditions that the solutions to the IRL problem in Definition \ref{df_IRL} must satisfy.

\begin{theorem}
\label{th1}
If the cost weight $Q,R$ and the solution $P$ satisfy both the learner SARE \eqref{riccl} and the following equation
\be 
\label{lya_KT}
 P\left(A+BK_{T}\right) +\left(A+BK_{T}\right)^{\top}P+ \left(C + DK_{T}\right)^{\top}P\left(C + DK_{T}\right) + Q + K_{T}^{\top}RK_{T} = 0,
\ee
then, the corresponding gain matrix $K$ in \eqref{opti_conl} equals the target gain matrix $K_{T}$ in \eqref{opti_con}.
\end{theorem}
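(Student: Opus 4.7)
The plan is to exploit a standard completing-the-square identity that rewrites the learner SARE in "Lyapunov form" relative to any candidate feedback gain. First I would define the shorthand $S = R + D^{\top} P D$ and $L = B^{\top} P + D^{\top} P C$, so that the learner's induced gain from \eqref{opti_conl} reads $K = -S^{-1}L$ and the SARE \eqref{riccl} becomes $PA + A^{\top}P + C^{\top}PC + Q - L^{\top}S^{-1}L = 0$. Positivity of $S$ is immediate since $R > 0$ and $D^{\top}PD \ge 0$, a fact I will need at the very end.

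Next, for an arbitrary matrix $\tilde{K} \in \mathbb{R}^{m \times n}$, I would expand
\[
P(A+B\tilde{K}) + (A+B\tilde{K})^{\top}P + (C+D\tilde{K})^{\top}P(C+D\tilde{K}) + Q + \tilde{K}^{\top}R\tilde{K}
\]
and collect the cross terms as $\tilde{K}^{\top} S \tilde{K} + L^{\top}\tilde{K} + \tilde{K}^{\top} L$. Completing the square with respect to $\tilde{K}$ around $-S^{-1}L = K$ gives
\[
\tilde{K}^{\top} S \tilde{K} + L^{\top}\tilde{K} + \tilde{K}^{\top} L = (\tilde{K} - K)^{\top} S (\tilde{K} - K) - L^{\top} S^{-1} L.
\]
Substituting back and invoking the SARE \eqref{riccl} to kill $PA + A^{\top}P + C^{\top}PC + Q - L^{\top}S^{-1}L$, I obtain the key identity
\[
P(A+B\tilde{K}) + (A+B\tilde{K})^{\top}P + (C+D\tilde{K})^{\top}P(C+D\tilde{K}) + Q + \tilde{K}^{\top}R\tilde{K} = (\tilde{K}-K)^{\top} S (\tilde{K}-K),
\]
which holds for every $\tilde{K}$.

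Finally I would specialize $\tilde{K} = K_{T}$. The left-hand side then vanishes by hypothesis \eqref{lya_KT}, so $(K_{T}-K)^{\top}(R + D^{\top}PD)(K_{T}-K) = 0$. Positive definiteness of $S$ forces $K_{T} - K = 0$, proving the claim. No step is really an obstacle; the only point that requires care is the algebraic expansion and correct handling of the non-symmetric cross terms in the completing-the-square identity, after which the conclusion is immediate.
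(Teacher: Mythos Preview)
Your proposal is correct and follows essentially the same approach as the paper: both arguments reduce to the identity $(K_{T}-K)^{\top}(R+D^{\top}PD)(K_{T}-K)=0$ by combining the SARE \eqref{riccl} with the Lyapunov equation \eqref{lya_KT}, and then invoke positive definiteness of $R+D^{\top}PD$. The only cosmetic difference is that you package the algebra as a general completing-the-square identity valid for any $\tilde{K}$ before specializing to $\tilde{K}=K_{T}$, whereas the paper expands \eqref{riccl} directly in terms of $K$ and $K_{T}$ and subtracts \eqref{lya_KT}.
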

\begin{proof}
Rewrite \eqref{riccl} with $K_{T}$ in \eqref{opti_con} and $K$ in \eqref{opti_conl} as 
\bex
\begin{aligned}
&P\left(A + BK_{T}\right) + \left(A + BK_{T}\right)^{\top}P+  \left(C + DK_{T}\right)^{\top}P\left(C + DK_{T}\right) + K_{T}^{\top}\left(R+D^{\top}PD\right)K \\ &+ K^{\top}\left(R+D^{\top}PD\right)K_{T} - K_{T}^{\top}D^{\top}PDK_{T} - K^{\top}\left(R+D^{\top}PD\right)K + Q = 0.
\end{aligned}
\eex
and subtract it from \eqref{lya_KT} to yield
\be 
\label{KT-K}
\left(K_{T} - K\right)^{\top}\left(R+D^{\top}PD\right)\left(K_{T} - K\right) = 0.
\ee

Since $R>0$, \eqref{KT-K} concludes that $K = K_{T}$. This completes the proof.
\end{proof}

Then, since the data of target trajectory $X_{T}$ and $u_{T}$ can be measured, we use collected data to obtain the expert's gain matrix $K_{T}$ in \eqref{opti_con}. Write \eqref{opti_con} expanded as 
\be 
\label{ls_KT}
\begin{aligned}
\tilde{u}_{T}:=&\left[u_{T}(t-(k-1)\Delta t),~\dots,~u_{T}(t-\Delta t),~u_{T}(t)\right] \\=& K_{T}\left[X_{T}(t-(k-1)\Delta t),~\dots,~X_{T}(t-\Delta t),~X_{T}(t)\right] =: K_{T}\tilde{X}_{T},
\end{aligned}
\ee
where the integer $k \geq mn$ is the number of data groups of $X_{d}$ and $u_{d}$. By using the batch least-square method to solve  \eqref{ls_KT}, the estimate of $K_{T}$ is given by
\be 
\label{ls_KT2}
\hat{K}_{T} = \tilde{u}_{T}\tilde{X}_{T}^{\top}\left(\tilde{X}_{T}\tilde{X}_{T}^{\top}\right)^{-1}.
\ee 

Next, assuming the effectiveness of least-square methods has $\hat{K}_{T} = K_T$, an IRL policy iteration procedure to find the $Q$, $R$, and $P$ satisfying Theorem \ref{th1} is summarized in the following algorithm.

\begin{breakablealgorithm}
  \label{algo1}
  \caption{Model-based IRL algorithm for the stochastic LQ problem.} 
  \renewcommand{\algorithmicrequire}{\textbf{Initialization:}}
\begin{algorithmic}[1]
  \Require Choose an arbitrary matrix $R >0$, an initial reward weight matrix $Q^{0}>0$, and compute $\hat{K}_{T}$ by \eqref{ls_KT2}.
  \State Set $i=0$.
  \Repeat
  \State (Policy correction) Update $P^{i}$ by
  \be 
  \label{lya_algo1}
  P^{i}\left(A+BK_{T}\right) +\left(A+BK_{T}\right)^{\top}P^{i}+ \left(C+DK_{T}\right)^{\top}P^{i}\left(C+DK_{T}\right) = -Q^{i} - K_{T}^{\top}RK_{T}.
  \ee
  \State (Policy update) Update control policy based on 
  \be
  \label{con_algo1}
  u^{i+1} = K^{i+1}x = -\left(R+D^{\top}P^{i}D\right)^{-1}\left(B^{\top}P^{i} + D^{\top}P^{i}C\right)x.
  \ee
  \State (Cost function weight construction) Update $Q^{i+1}$ by
  \be 
  \label{q_algo1}
  Q^{i+1} = -A^{\top}P^{i}- P^{i}A - C^{\top}P^{i}C + (K^{i+1})^{\top}\left(R+D^{\top}P^{i}D\right)K^{i+1}.
  \ee
  \State $i \leftarrow i+1$
  \Until{$\Vert Q^{i} - Q^{i-1}\Vert \leq \epsilon_{1}$}.
\end{algorithmic}
\end{breakablealgorithm}

\subsection{Analysis of Algorithm \ref{algo1}}
Now, we analyze the convergence, stability of the learned policy, and non-unique solutions of Algorithm \ref{algo1} using the following theorems.
\begin{theorem}[Convergence]
\label{conv_th}
Starting with an initial $Q^{0}$ such that $0<Q^{0}\leq \hat{Q}$, where $\hat{Q} >0$ is a solution to Theorem \ref{th1} associated with the $R>0$, Algorithm \ref{algo1} converges, and the solutions $Q^{i}, P^{i}, K^{i+1}~(i=0,1,2,\cdots)$ converge to $Q^{\ast}, P^{\ast}, K^{\ast}$ that satisfy
\be 
\label{riccth1}
P^{\ast} A+A^{\top} P^{\ast}+C^{\top} P^{\ast} C+Q^{\ast}-\left[P^{\ast} B+C^{\top} P^{\ast} D\right]\left[R+D^{\top} P^{\ast} D\right]^{-1}\left[B^{\top} P^{\ast}+D^{\top} P^{\ast} C\right]=0.
\ee
\be 
\label{opti_conth1}
K^{\ast} = K_{T} =  -\left(R+D^{\top}P^{\ast}D\right)^{-1}\left(B^{\top}P^{\ast} + D^{\top}P^{\ast}C\right).
\ee
Moreover, the solutions $Q^{\ast}, P^{\ast}$, and $K^{\ast}$ satisfy Theorem \ref{th1}.
\end{theorem}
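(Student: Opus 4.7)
The plan is to prove convergence via a monotone convergence argument in the L\"{o}wner order on symmetric matrices, then identify the limit with the desired SARE/Lyapunov fixed point. First I would check that each iterate is well-defined: since the Lyapunov equation \eqref{lya_algo1} admits a unique PSD solution $P^i$ whenever the closed loop $(A+BK_T, C+DK_T)$ is mean-square stable (which holds because $K_T$ is the stabilising optimal expert gain), and since $R+D^\top P^i D>0$ is always invertible, the updates \eqref{con_algo1}--\eqref{q_algo1} unambiguously produce $K^{i+1}$ and $Q^{i+1}$.

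The algebraic heart of the proof is the monotonicity identity
\bex
Q^{i+1}-Q^i \;=\; (K_T-K^{i+1})^\top (R+D^\top P^i D)(K_T-K^{i+1}) \;\ge\; 0,
\eex
which I would derive by solving \eqref{lya_algo1} for $A^\top P^i + P^i A + C^\top P^i C$, substituting into \eqref{q_algo1}, and completing the square around $K^{i+1}=-(R+D^\top P^i D)^{-1}(B^\top P^i+D^\top P^i C)$. This shows $\{Q^i\}$ is non-decreasing in the L\"{o}wner order, and by the comparison principle for the stochastic Lyapunov equation with the fixed closed loop $(A+BK_T, C+DK_T)$, $\{P^i\}$ inherits the same monotonicity.

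The upper bound $Q^i \le \hat Q$ (equivalently $P^i \le \hat P$) for all $i$ would be proved by induction, and this is the main obstacle. The base case is the hypothesis on $Q^0$. For the inductive step, Theorem~\ref{th1} applied to $\hat Q$ gives that $\hat P$ satisfies both the learner SARE \eqref{riccl} with $\hat Q$ and the Lyapunov equation with $K_T$; subtracting the latter from \eqref{lya_algo1} and using $Q^i\le\hat Q$ gives $\Delta P^i:=\hat P-P^i\ge 0$. One then expresses $\hat Q-Q^{i+1}$ using both the SARE representation of $\hat Q$ and the definition \eqref{q_algo1} of $Q^{i+1}$, extracting terms in $\Delta P^i$, $P^i$, and the learner closed-loop matrix $A+BK^{i+1}$ together with its diffusion counterpart $C+DK^{i+1}$. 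Unlike the scalar deterministic case, where the difference factors cleanly as $\Delta P^i \bigl(\hat Q/\hat P + M P^i\bigr)$, the matrix/stochastic version requires a careful handling of non-commutativity and of the cross terms produced by $C$ and $D$; the delicate step is to recognise that these combine with $\Delta P^i\ge 0$ so as to keep $\hat Q-Q^{i+1}\ge 0$.

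With monotonicity and boundedness in place, $\{Q^i\}$ converges to some $Q^*\ge 0$ by the monotone convergence theorem for symmetric matrices, and by continuous dependence of the Lyapunov equation and of \eqref{con_algo1} we get $P^i\to P^*$ and $K^{i+1}\to K^*$. Passing to the limit in \eqref{con_algo1} yields \eqref{opti_conth1}, and rearranging the limit of \eqref{q_algo1} yields exactly \eqref{riccth1}. Finally, sending $i\to\infty$ in the monotonicity identity forces $(K_T-K^*)^\top(R+D^\top P^* D)(K_T-K^*)=0$, so $K^*=K_T$; together with the limit version of \eqref{lya_algo1}, Theorem~\ref{th1} then certifies that $(Q^*,P^*,K^*)$ solves the IRL problem.
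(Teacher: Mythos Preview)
Your proposal is correct and follows essentially the same monotone--convergence strategy as the paper: show $\{Q^i\}$ is nondecreasing, bound it above by $\hat Q$ via induction, then pass to the limit. Your direct identity $Q^{i+1}-Q^i=(K_T-K^{i+1})^\top(R+D^\top P^iD)(K_T-K^{i+1})$ is in fact a little cleaner than the paper's route, which first establishes $P^{i-1}\le P^i$ by subtracting two Lyapunov equations with closed loop $K_T$ and then infers the monotonicity of $\{Q^i\}$ from the order--preserving correspondence between $Q$ and $P$; similarly, your use of this same identity at the limit to force $K^\ast=K_T$ is more economical than the paper's separate manipulation. For the inductive upper bound---the step you correctly flag as delicate but leave vague---the paper's concrete maneuver is to rewrite the equations for both $P^i$ and $\hat P$ as Lyapunov equations with respect to the \emph{learner's} current closed loop $(A+BK^{i+1},\,C+DK^{i+1})$ rather than $K_T$; subtracting then gives
\[
(\hat P-P^i)(A+BK^{i+1})+(A+BK^{i+1})^\top(\hat P-P^i)+(C+DK^{i+1})^\top(\hat P-P^i)(C+DK^{i+1})
= Q^{i+1}-\hat Q+(K^{i+1}-K_T)^\top(R+D^\top\hat PD)(K^{i+1}-K_T),
\]
and the conclusion $Q^{i+1}\le\hat Q$ is drawn from $\hat P-P^i\ge 0$ together with the fact that $K^{i+1}$ is itself stabilising (guaranteed by \eqref{q_algo1}).
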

\begin{proof}
\textbf{Convergence proof.} We first prove that $Q^{i}~(i=0,1,2,\cdots)$ solved by Algorithm \ref{algo1} is an increasing sequence. Substituting \eqref{con_algo1} for $K^{i}$ into \eqref{q_algo1} for $Q^{i}$, we obtain
\bex 
P^{i-1}A + A^{\top}P^{i-1}  + C^{\top}P^{i-1}C = (K^{i})^{\top}\left(R+D^{\top} P^{i-1}D\right)K^{i} - Q^{i},
\eex
which can be rewritten as 
\be 
\label{lya_pi-1}
\begin{aligned}
&  P^{i-1}\left(A+BK_{T}\right) + \left(A+BK_{T}\right)^{\top}P^{i-1}+ \left(C+DK_{T}\right)^{\top}P^{i-1}\left(C+DK_{T}\right) \\ &= (K^{i})^{\top}\left(R+D^{\top} P^{i-1}D\right)K^{i} - Q^{i} - K_{T}^{\top}\left(R+D^{\top} P^{i-1}D\right)K^{i} - (K^{i})^{\top}\left(R+D^{\top} P^{i-1}D\right) K_{T} + K_{T}^{\top}D^{\top} P^{i-1}D K_{T}.
\end{aligned}
\ee
Subtracting \eqref{lya_algo1} from \eqref{lya_pi-1} yields
\bex
\begin{aligned}
& \left(P^{i-1}-P^{i}\right)\left(A + BK_{T}\right) +\left(A + BK_{T}\right)^{\top}\left(P^{i-1}-P^{i}\right)+  \left(C+DK_{T}\right)^{\top}\left(P^{i-1}-P^{i}\right)\left(C+DK_{T}\right) \\ & = \left(K^{i} - K_{T}\right)^{\top}\left(R+D^{\top} P^{i-1}D\right)\left(K^{i} - K_{T}\right) \geq 0.
\end{aligned}
\eex
Since $K_{T}$ is a stabilizer of system \eqref{sde}, by \citep[Theorem 1]{rami2000linear}, it follows that $P^{i-1} \leq P^{i}$ holds for all iteration. Each pair of $(P^{i}, Q^{i+1})$ mentioned above satisfies \eqref{q_algo1}, and they correspond uniquely to each other. Now, according to \citep[Theorem 3.2.3]{sun2020stochastic}, it is known that if $Q^{i} \leq Q^{i+1}$, then $P^{i-1} \leq P^{i}$. For this reason, $Q^{i+1} \geq Q^{i} >0$ holds for $i=0,1,\cdots$, and $Q^{i+1} = Q^{i}$ holds if and only if $K^{i+1} = K_{T}$. Note that the goal of Algorithm \ref{algo1} is to achieve $K^{i+1} = K_{T} $.

Next, we claim that $Q^{i}$ is bounded by an upper bound. Let $\hat{P} >0$ and $\hat{Q} >0$ be a group of solutions that satisfy Theorem \ref{th1}. That is
\be 
\label{ricc_th2}
\hat{P}A + A^{\top}\hat{P} + C^{\top}\hat{P}C-\left(\hat{P}B+C^{\top}\hat{P}D\right)\left(R+D^{\top}\hat{P}D\right)^{-1}\left(B^{\top}\hat{P}+D^{\top}\hat{P}C\right) + \hat{Q} =0,
\ee
\be 
\label{KT_th2}
K_{T} = -\left(R+D^{\top}\hat{P}D\right)^{-1}\left(B^{\top}\hat{P} + D^{\top}\hat{P}C\right).
\ee

Applying \eqref{KT_th2}, we can rewrite \eqref{ricc_th2} as
\be 
\label{lya_th2}
\hat{P}\left(A + BK_{T}\right) + \left(A + BK_{T}\right)^{\top}\hat{P} + \left(C+DK_{T}\right)^{\top}\hat{P}\left(C+DK_{T}\right)+\hat{Q} + K_{T}^{\top}RK_{T} =0.
\ee
If $Q^{i} \leq \hat{Q}$ holds, then \eqref{lya_algo1} and \eqref{lya_th2} will solve $0 < P^{i} \leq \hat{P}$ with stabilizer $K_{T}$. With \eqref{opti_con}, \eqref{con_algo1}, and \eqref{KT_th2}, SAREs \eqref{q_algo1} and \eqref{ricc_th2} can be rewritten as 
\be 
\label{lyap_i+1th2}
 P^{i}\left(A+BK^{i+1}\right) + \left(A+BK^{i+1}\right)^{\top}P^{i}+ \left(C+DK^{i+1}\right)^{\top}P^{i} \left(C+DK^{i+1}\right) = -Q^{i+1} - (K^{i+1})^{\top}RK^{i+1},
\ee
\be 
\label{lyap_i+1Tth2}
\begin{aligned}
& \hat{P}\left(A+BK^{i+1}\right) + \left(A+BK^{i+1}\right)^{\top}\hat{P}+ \left(C+DK^{i+1}\right)^{\top}\hat{P} \left(C+DK^{i+1}\right) \\ &= -\hat{Q} + K_{T}^{\top}\left(R+D^{\top}\hat{P}D\right)K_{T} - (K^{i+1})^{\top}\left(R+D^{\top}\hat{P}D\right)K_{T} - K_{T}^{\top}\left(R+D^{\top}\hat{P}D\right)K^{i+1} + (K^{i+1})^{\top}D^{\top}\hat{P}DK^{i+1}.
\end{aligned}
\ee
respectively. Since \eqref{q_algo1} ensures that $K^{i+1}$ is a stabilizer of the system \eqref{sde}, subtracting \eqref{lyap_i+1th2} from \eqref{lyap_i+1Tth2} and using $0<P^{i}\leq \hat{P}$ gets
\bex 
\begin{aligned}
&\left(\hat{P}-P^{i}\right)\left(A+BK^{i+1}\right) + \left(A+BK^{i+1}\right)^{\top} \left(\hat{P}-P^{i}\right) + \left(C+DK^{i+1}\right)^{\top}\left(\hat{P}-P^{i}\right)\left(C+DK^{i+1}\right) \\ & = Q^{i+1} - \hat{Q} + \left(K^{i+1}-K_{T}\right)^{\top}\left(R+D^{\top}\hat{P}D\right)\left(K^{i+1}-K_{T}\right).
\end{aligned}
\eex
Consequently, $Q^{i+1}\leq \hat{Q}$ holds.

Through the above analysis, it can be concluded that if Algorithm \ref{algo1} is initialized with a $Q^{0}$ such that $0 < Q^{0} \leq \hat{Q}$, then $Q^{i}>0, i =0,1,\cdots$ will monotonically increase with an upper bound. Hence, Algorithm \ref{algo1} converges.

\textbf{Converged results.} We now show that converged solutions satisfy \eqref{riccth1} and \eqref{opti_conth1}.

Substituting \eqref{q_algo1} into \eqref{lya_algo1} gives
\bex 
\label{ricca_ii+1}
\begin{aligned}
&P^{i+1}A + A^{\top}P^{i+1} + C^{\top}P^{i+1}C + K_{T}^{\top}\left(B^{\top}P^{i+1}+D^{\top}P^{i+1}C\right) + \left(P^{i+1}B + C^{\top}P^{i+1}D\right)K_{T} + K_{T}^{\top}\left(R+D^{\top}P^{i+1}D\right)K_{T} \\ &=  P^{i}A + A^{\top}P^{i}+ C^{\top}P^{i}C - (K^{i+1})^{\top}\left(R+D^{\top}P^{i}D\right)K^{i+1}.
\end{aligned}
\eex
Taking $P^{i+1} = P^{i} =P^{\ast}$ as the convergence value, the above equation becomes
\be 
\label{KTKast}
\left(K^{\ast} - K_{T}\right)^{\top}\left(R+D^{\top}P^{\ast}D\right)\left(K^{\ast} - K_{T}\right) =0,
\ee 
where $K^{\ast} = -\left(R+D^{\top}P^{\ast}D\right)^{-1}\left(B^{\top}P^{\ast} + D^{\top}P^{\ast}C\right)$. Since $R+D^{\top}P^{\ast}D > 0$, \eqref{KTKast} yields $K^{\ast} = K_{T}$, which is actually \eqref{opti_conth1}. The converged $Q^{\ast}$ generated by \eqref{q_algo1} is exactly \eqref{riccth1}.

Then, we claim that the Converged solutions satisfy Theorem \ref{th1}.

Rewriting \eqref{riccth1} with \eqref{opti_conth1} yields
\bex 
 P^{\ast}\left(A+BK_{T}\right)+ \left(A+BK_{T}\right)^{\top}P^{\ast} + \left(C + DK_{T}\right)^{\top}P^{\ast}\left(C + DK_{T}\right) + Q^{\ast} + K_{T}^{\top}RK_{T} =0,
\eex 
which is actually \eqref{lya_KT}. Obviously, \eqref{riccth1} is actually \eqref{riccl}. Thus, $Q^{\ast}$, $P^{\ast}$, and $K^{\ast}$ satisfy Theorem \ref{th1}. This completes the proof.
\end{proof}

We next prove the stability of Algorithm \ref{algo1}.
\begin{theorem}[Stability]
Each iteration of  Algorithm \ref{algo1} yields a stabilizer $K^{i+1}$ by \eqref{con_algo1} for the learner agent \eqref{sdel}.
\end{theorem}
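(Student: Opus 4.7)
The plan is to exhibit a generalized stochastic Lyapunov identity for the learner closed-loop $(A+BK^{i+1},\,C+DK^{i+1})$ with a negative definite right-hand side, and then invoke a standard result such as \citep[Thm.~3.2.3]{sun2020stochastic} or \citep[Thm.~1]{rami2000linear} to conclude mean-square stability.

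First I would argue that $P^{i}>0$. Under Assumption~\ref{ass_1}, $K_{T}$ is a mean-square stabilizer of \eqref{sde} (it is the optimal target gain from SARE~\eqref{ricc}), and by the monotone induction in Theorem~\ref{conv_th} starting from $Q^{0}>0$ we have $Q^{i}>0$. Hence $Q^{i}+K_{T}^{\top}RK_{T}>0$, so the policy-correction step \eqref{lya_algo1} is a generalized Lyapunov equation with stabilizing closed-loop $(A+BK_{T},\,C+DK_{T})$ and negative definite right-hand side; the stochastic Lyapunov theorem therefore yields a unique positive definite $P^{i}$.

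Next I would perform the algebraic reduction for the closed-loop Lyapunov expression at $K^{i+1}$. From \eqref{con_algo1}, $(R+D^{\top}P^{i}D)K^{i+1}=-(B^{\top}P^{i}+D^{\top}P^{i}C)$, which collapses the cross-terms of $P^{i}(A+BK^{i+1})+(A+BK^{i+1})^{\top}P^{i}+(C+DK^{i+1})^{\top}P^{i}(C+DK^{i+1})$ to $-2(K^{i+1})^{\top}(R+D^{\top}P^{i}D)K^{i+1}$. Substituting \eqref{q_algo1} then yields the identity
\[
P^{i}(A+BK^{i+1})+(A+BK^{i+1})^{\top}P^{i}+(C+DK^{i+1})^{\top}P^{i}(C+DK^{i+1})=-Q^{i+1}-(K^{i+1})^{\top}RK^{i+1},
\]
which is precisely equation \eqref{lyap_i+1th2} used inside the proof of Theorem~\ref{conv_th}.

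Finally, Theorem~\ref{conv_th} gives $Q^{i+1}\geq Q^{i}>0$ and $R>0$, so the right-hand side above is negative definite; together with $P^{i}>0$, the stochastic Lyapunov theorem applied to the learner closed-loop yields $\lim_{s\to\infty}\mathbb{E}[X(s)^{\top}X(s)]=0$, so $K^{i+1}$ stabilizes \eqref{sdel}. The main obstacle is avoiding circularity with Theorem~\ref{conv_th}, whose proof already invoked the stability of $K^{i+1}$ in passing; a clean exposition therefore requires an induction on $i$ that interleaves the positivity/monotonicity of $Q^{i},P^{i}$ with the stabilizing property of $K^{i+1}$, using only the fact that $K_{T}$ is a stabilizer as an external input.
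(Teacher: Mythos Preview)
Your proposal is correct and follows essentially the same approach as the paper. The paper argues that $K_{T}$ is a stabilizer and $Q^{i}>0$ force $P^{i}>0$ via \citep[Theorem~3.2.3]{sun2020stochastic}, then outsources the conclusion that $K^{i+1}$ is a stabilizer to \citep[Theorem~2.1]{li2022stochastic}; your explicit derivation of the closed-loop identity \eqref{lyap_i+1th2} and subsequent appeal to the stochastic Lyapunov theorem is precisely what that cited result does internally, so you have simply unpacked the citation. Your observation about circularity is apt: the paper, too, invokes $Q^{i+1}>0$ from Theorem~\ref{conv_th} while the proof of Theorem~\ref{conv_th} in turn uses the stabilizing property of $K^{i+1}$ for the upper-bound step, and the clean resolution is exactly the interleaved induction you describe.
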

\begin{proof}
Since $K_{T}$ is a stabilizer of system \eqref{sde}, and $Q^{i} >0$, by \citep[Theorem 3.2.3]{sun2020stochastic}, Lyapunov Recursion \eqref{lya_algo1} exists a unique positive definite solution $P^{i} \in \mathbb{S}^{n}$.

Then, by the proof of \citep[Theorem 2.1]{li2022stochastic}, it is known that $K^{i+1}$ in \eqref{con_algo1} is also a stabilizer of system \eqref{sde}. From the proof of Theorem \ref{conv_th}, we know that $Q^{i+1} >0$, so $K^{i+1}$ in \eqref{con_algo1} is a stabilizer of system \eqref{sde} for all $i = 0,1,\cdots$.
\end{proof}

\begin{remark}
By \citep[Theorem 13]{rami2000linear}, the converged results $P^{\ast}$, $K^{\ast}$, and $Q^{\ast}$ solved by Algorithm \ref{algo1} shown in Theorem \ref{conv_th} obtain the optimal feedback control $u^{\ast} = K^{\ast}x$ of the stochastic LQ problem of the learner.
\end{remark}

In fact, the cost function weights $R$ and $Q^{\ast}$ satisfying \eqref{riccth1}-\eqref{opti_conth1} that obtain the same gain matrix
$K^{\ast} = K_T$ may not be unique. As a result, $R$ and $Q^{\ast}$ can be different from the expert's $R_{T}$ and $Q_{T}$, but $Q^{\ast}$ is an equivalent weight to $Q_{T}$. This is an important feature of the IRL algorithm. The following theorem will discuss the connection between $R_{T}, Q_{T}, P_{T}$ and $R,Q^{\ast}, P^{\ast}$.
\begin{theorem}[Non-uniqueness of solution]
Recall $R_{T}, Q_{T}, P_{T}$ satisfying \eqref{ricc} and \eqref{opti_con} and let $R_{o},Q_{o}, P_{o}$ satisfy
\be 
\label{BP_o}
B^{\top}P_{o} + D^{\top}P_{o}C = \left(R_{o}+D^{\top}P_{o}D\right)\left(R_{T}+D^{\top}P_{T}D\right)^{-1}\left(B^{\top}P_{T} + D^{\top}P_{T}C\right),
\ee
and
\be 
\label{ricc_o}
 P_{o}A + A^{\top}P_{o} + C^{\top}P_{o}C + Q_{o} - K_{T}^{\top}\left(R_{o}+D^{\top}P_{o}D\right)K_{T} = 0,
\ee
where $R_{o}=R_{T}-R$. Then any $P^{\ast} = P_{T}-P_{o}$ and $Q^{\ast} = Q_{T}-Q_{o}$ satisfy \eqref{riccth1} and \eqref{opti_conth1}.
\end{theorem}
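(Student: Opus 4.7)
The plan is to verify the two conclusions of the theorem separately: first that $K^{\ast} = K_T$ under the definitions $P^{\ast} = P_T - P_o$, $R = R_T - R_o$, and second that the SARE \eqref{riccth1} is satisfied by $(P^{\ast}, Q^{\ast}, R)$. Throughout, the idea is to exploit linearity in the decompositions $P^{\ast} = P_T - P_o$, $Q^{\ast} = Q_T - Q_o$, and $R = R_T - R_o$ so that each quantity appearing in \eqref{riccth1} and \eqref{opti_conth1} splits into a ``target'' piece and an ``offset'' piece.

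For the gain identity \eqref{opti_conth1}, I would first observe that
\begin{equation*}
R + D^{\top}P^{\ast}D = (R_T + D^{\top}P_T D) - (R_o + D^{\top}P_o D),
\end{equation*}
and similarly $B^{\top}P^{\ast} + D^{\top}P^{\ast}C$ decomposes into target minus offset pieces. Using \eqref{opti_con} rewritten as $B^{\top}P_T + D^{\top}P_T C = -(R_T + D^{\top}P_T D)K_T$, and using the key coupling \eqref{BP_o} in the form $B^{\top}P_o + D^{\top}P_o C = -(R_o + D^{\top}P_o D)K_T$, subtraction yields
\begin{equation*}
B^{\top}P^{\ast} + D^{\top}P^{\ast}C = -\bigl(R + D^{\top}P^{\ast}D\bigr)K_T,
\end{equation*}
so that $K^{\ast} = -(R+D^{\top}P^{\ast}D)^{-1}(B^{\top}P^{\ast}+D^{\top}P^{\ast}C) = K_T$, which is \eqref{opti_conth1}.

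For \eqref{riccth1}, I would first replace the quadratic term in \eqref{riccth1} using the identity from the previous step. Since $(P^{\ast}B + C^{\top}P^{\ast}D) = (B^{\top}P^{\ast}+D^{\top}P^{\ast}C)^{\top} = -K_T^{\top}(R+D^{\top}P^{\ast}D)$, the Riccati equation \eqref{riccth1} is equivalent to the Lyapunov-type identity
\begin{equation*}
P^{\ast}A + A^{\top}P^{\ast} + C^{\top}P^{\ast}C + Q^{\ast} - K_T^{\top}(R+D^{\top}P^{\ast}D)K_T = 0.
\end{equation*}
Now I would rewrite \eqref{ricc} in the same Lyapunov form using \eqref{opti_con} to get
\begin{equation*}
P_T A + A^{\top}P_T + C^{\top}P_T C + Q_T - K_T^{\top}(R_T + D^{\top}P_T D)K_T = 0,
\end{equation*}
and note that \eqref{ricc_o} is already of this form with $(P_o, Q_o, R_o)$. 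Subtracting the equation for $(P_o, Q_o, R_o)$ from the equation for $(P_T, Q_T, R_T)$ and invoking $P^{\ast} = P_T - P_o$, $Q^{\ast} = Q_T - Q_o$, $R = R_T - R_o$ recovers exactly the required Lyapunov identity, completing the verification.

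There is no serious obstacle here; the argument is essentially bookkeeping, and the only subtlety is recognizing that the cross-coupling equation \eqref{BP_o} is tailored precisely so that the subtraction in the gain identity closes up, after which \eqref{ricc_o} is engineered to make the Lyapunov subtraction work. No invertibility beyond $R + D^{\top}P^{\ast}D = (R_T + D^{\top}P_T D) - (R_o + D^{\top}P_o D) > 0$ (which should be assumed to guarantee the gain is well defined) needs to be addressed, and the positivity of $P^{\ast}$ and $Q^{\ast}$ is not claimed in the statement.
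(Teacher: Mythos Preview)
Your proposal is correct and follows essentially the same approach as the paper: both arguments subtract the ``offset'' relations \eqref{BP_o}--\eqref{ricc_o} from the target relations \eqref{opti_con}--\eqref{ricc} (the latter rewritten in Lyapunov form via $K_T$) to recover \eqref{opti_conth1} and \eqref{riccth1}. Your derivation of the gain identity is in fact a bit tidier than the paper's, since you observe directly that \eqref{BP_o} is equivalent to $B^{\top}P_o + D^{\top}P_oC = -(R_o+D^{\top}P_oD)K_T$ and then subtract, whereas the paper carries out the same computation through a longer chain of substitutions.
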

\begin{proof}
Subtracting \eqref{ricc_o} from \eqref{ricc} gives
\be
\label{ricc_T-o} 
\left(P_{T} - P_{o}\right)A + A^{\top}\left(P_{T} - P_{o}\right) + C^{\top}\left(P_{T} - P_{o}\right)C + \left(Q_{T}-Q_{o}\right) - K_{T}^{\top}\left(R_{T}-R_{o} + D^{\top}\left(P_{T} - P_{o}\right)D\right)K_{T} = 0.
\ee
Using $P^{\ast} = P_{T} -P_{o}$, $R_{o} = R_{T} -R$ and $R>0$ in \eqref{BP_o} yields 
\bex 
\begin{aligned}
K^{\ast} =& \left(R+D^{\top}P^{\ast}D\right)^{-1}\left(B^{\top}P^{\ast} + D^{\top}P^{\ast}C\right) \\  =& \left(R+D^{\top}P^{\ast}D\right)^{-1}\left(B^{\top}P_{T} + D^{\top}P_{T}C\right) - \left(R+D^{\top}P^{\ast}D\right)^{-1}\left(R_{T}+D^{\top}P_{T}D-R-D^{\top}P^{\ast}D\right) \\ & \times \left(R_{T}+D^{\top}P_{T}D\right)^{-1}\left(B^{\top}P^{\ast} + D^{\top}P^{\ast}C\right) \\ = & \left(R_{T}+D^{\top}P_{T}D\right)^{-1}\left(B^{\top}P_{T} + D^{\top}P_{T}C\right) = K_{T},
\end{aligned}
\eex
which is \eqref{opti_conth1}. Substituting it into \eqref{ricc_T-o} gives \eqref{riccth1}. This completes the proof.
\end{proof}

\section{Model-free off-policy IRL algorithm}
In general, the knowledge of system dynamics may not be known. Therefore, we propose a model-free IRL algorithm based on the model-based Algorithm \ref{algo1}.

\begin{proposition}
\label{prop1}
Find $P^{i}$ from the matrix equation \eqref{lya_algo1} and obtain $K^{i+1}$ using \eqref{con_algo1} in Algorithm \ref{algo1} are equivalent to solving the following equation:
\be 
\label{ito_int}
\begin{aligned}
& \mathbb{E}^{\mathcal{F}_{t}}X(t+\Delta t)^{\top}P^{i}X(t+\Delta t) - x^{\top}P^{i}x - 2\mathbb{E}^{\mathcal{F}_{t}}\int_{t}^{t+\Delta t}\left(u(s)-K_{T}X(s)\right)^{\top}\tilde{B}^{i}X(s)ds \\ & -\mathbb{E}^{\mathcal{F}_{t}}\int_{t}^{t+\Delta t}u(s)^{\top}\tilde{D}^{i}u(s)ds + \mathbb{E}^{\mathcal{F}_{t}}\int_{t}^{t+\Delta t} (K_{T}X(s))^{\top}\tilde{D}^{i}K_{T}X(s)ds \\ = & -\mathbb{E}^{\mathcal{F}_{t}}\int_{t}^{t+\Delta t}X(s)^{\top}\left(Q^{i}+K_{T}^{\top}RK_{T}\right)X(s)ds,
\end{aligned}
\ee
where $\tilde{B}^{i} = B^{\top}P^{i}+D^{\top}P^{i}C$, $\tilde{D}^{i}=D^{\top}P^{i}D$, $\Delta t >0$ is the integral time period, and $X(\cdot)$ is governed by the learner agent \eqref{sdel} with any control $u(\cdot)$.
\end{proposition}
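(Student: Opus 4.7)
The plan is to apply It\^o's formula to the quadratic form $X(s)^{\top}P^{i}X(s)$ along the learner dynamics \eqref{sdel} driven by an arbitrary control $u(\cdot)$, then eliminate all appearances of the unknown system matrices $A,B,C,D$ by invoking the policy-correction Lyapunov identity \eqref{lya_algo1}. What is left will be an equation written entirely in the data-accessible quantities $(P^{i},\tilde{B}^{i},\tilde{D}^{i},Q^{i},R,K_{T})$ together with trajectory integrals, which is exactly \eqref{ito_int}. Equivalence with the policy update \eqref{con_algo1} is then immediate because $\tilde{B}^{i}$ and $\tilde{D}^{i}$ appearing in \eqref{ito_int} are precisely the two blocks needed to form $K^{i+1}=-(R+\tilde{D}^{i})^{-1}\tilde{B}^{i}$.

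Concretely, I would first compute
\bex
d\bigl(X^{\top}P^{i}X\bigr) = \bigl[2X^{\top}P^{i}(AX+Bu) + (CX+Du)^{\top}P^{i}(CX+Du)\bigr]ds + 2X^{\top}P^{i}(CX+Du)\,dW(s),
\eex
integrate from $t$ to $t+\Delta t$, and take $\mathbb{E}^{\mathcal{F}_{t}}$ so that the It\^o integral vanishes. Collecting the cross and diffusion terms using the abbreviations $\tilde{B}^{i}=B^{\top}P^{i}+D^{\top}P^{i}C$ and $\tilde{D}^{i}=D^{\top}P^{i}D$ leaves the integrand in the form $X^{\top}(P^{i}A+A^{\top}P^{i}+C^{\top}P^{i}C)X + 2u^{\top}\tilde{B}^{i}X + u^{\top}\tilde{D}^{i}u$. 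Next, expanding \eqref{lya_algo1} and solving for the symmetric block gives
\bex
P^{i}A+A^{\top}P^{i}+C^{\top}P^{i}C = -Q^{i}-K_{T}^{\top}RK_{T} - K_{T}^{\top}\tilde{B}^{i} - (\tilde{B}^{i})^{\top}K_{T} - K_{T}^{\top}\tilde{D}^{i}K_{T},
\eex
and substituting this into the integrand yields, after regrouping the bilinear terms as $2(u-K_{T}X)^{\top}\tilde{B}^{i}X$ and isolating the spurious $(K_{T}X)^{\top}\tilde{D}^{i}K_{T}X$ correction, exactly the identity \eqref{ito_int}.

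For the converse direction, I would note that \eqref{ito_int} is affine in the unknown triple $(P^{i},\tilde{B}^{i},\tilde{D}^{i})$ with the structural constraints $\tilde{B}^{i}=B^{\top}P^{i}+D^{\top}P^{i}C$ and $\tilde{D}^{i}=D^{\top}P^{i}D$ implicitly encoded through the It\^o derivation above. Hence any solution of \eqref{ito_int} reproduces the Lyapunov solution of \eqref{lya_algo1}, and feeding the recovered $\tilde{B}^{i},\tilde{D}^{i}$ into $K^{i+1}=-(R+\tilde{D}^{i})^{-1}\tilde{B}^{i}$ reproduces \eqref{con_algo1}. (The rank/persistence-of-excitation condition needed to actually extract the triple numerically from samples of \eqref{ito_int} is a concern for the subsequent algorithm rather than for the present equivalence statement.)

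The main obstacle is the algebraic bookkeeping in the substitution step: the four cross terms $P^{i}BK_{T}+K_{T}^{\top}B^{\top}P^{i}+C^{\top}P^{i}DK_{T}+K_{T}^{\top}D^{\top}P^{i}C$ coming from the expansion of \eqref{lya_algo1} must be shown to combine, together with the It\^o-generated $2u^{\top}\tilde{B}^{i}X$, into the clean bilinear $2(u-K_{T}X)^{\top}\tilde{B}^{i}X$, while the quadratic-in-control $u^{\top}\tilde{D}^{i}u$ picks up an exact $(K_{T}X)^{\top}\tilde{D}^{i}K_{T}X$ compensation from $K_{T}^{\top}\tilde{D}^{i}K_{T}$. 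This is routine but requires careful symmetrization; once it is executed, the remainder of the argument is just rearranging integrated expectations.
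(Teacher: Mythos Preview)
Your proposal is correct and follows essentially the same route as the paper: apply It\^o's formula to $X^{\top}P^{i}X$ and use the Lyapunov identity \eqref{lya_algo1} to eliminate the model-dependent block $P^{i}A+A^{\top}P^{i}+C^{\top}P^{i}C$, then integrate and take conditional expectation. The paper organizes the same algebra by first rewriting the dynamics as $dX=[(A+BK_{T})X+B(u-K_{T}X)]ds+[(C+DK_{T})X+D(u-K_{T}X)]dW$ so that \eqref{lya_algo1} can be invoked directly on the $(A+BK_{T},C+DK_{T})$ block, but this is merely a different ordering of the bookkeeping you describe.
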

\begin{proof}
Firstly, we rewrite the learner agent \eqref{sdel} as 
\be 
\label{sdelu}
dX(s) = \left[AX(s) + BK_{T}X(s) + B\left(u(s)-K_{T}X(s) \right)\right]dt + \left[CX(s) + DK_{T}X(s) + D\left(u(s)-K_{T}X(s) \right)\right]dW(s).
\ee

By \eqref{lya_algo1} and \eqref{sdelu}, applying It\^{o}'s formula to $X^{\top}(s)P^{i}X(s)$ yields
\be 
\label{ito_1}
\begin{aligned}
d X^{\top}(s)P^{i}X(s) = - X(s)^{\top}\left(Q^{i}+K_{T}^{\top}RX(s)\right) + 2\left(u(s)-K_{T}X(s)\right)^{\top}\tilde{B}^{i}X(s) + u(s)^{\top}\tilde{D}^{i}u(s)-(K_{T}X(s))^{\top}\tilde{D}^{i}K_{T}X(s).
\end{aligned}
\ee
Then, integrating both sides of \eqref{ito_1} from $t$ to $t+\Delta t$ and taking conditional expectation $\mathbb{E}^{\mathcal{F}_{t}}$, we can get \eqref{ito_int}. This completes the proof.
\end{proof}

\begin{proposition}
  \label{prop2}
  Find $Q^{i+1}$ from the matrix equation \eqref{q_algo1} in Algorithm \ref{algo1} is equivalent to solving the following equation:
  \be
  \label{ito_int_Q} 
  \begin{aligned}
  &\mathbb{E}^{\mathcal{F}_{t}}\int_{t}^{t+\Delta t} X(s)^{\top}Q^{i}X(s)ds \\= &-\mathbb{E}^{\mathcal{F}_{t}}X(t+\Delta t)^{\top}P^{i}X(t+\Delta t) + x^{\top}P^{i}x + \mathbb{E}^{\mathcal{F}_{t}}\int_{t}^{t+\Delta t}(K^{i+1}X(s))^{\top}\left(R+\tilde{D}^{i}\right)K^{i+1}X(s)ds \\ & +2\mathbb{E}^{\mathcal{F}_{t}}\int_{t}^{t+\Delta t}u(s)^{\top}\tilde{B}^{i}X(s)ds + \mathbb{E}^{\mathcal{F}_{t}}\int_{t}^{t+\Delta t}u(s)^{\top}\tilde{D}^{i}u(s)ds.
  \end{aligned}
  \ee
\end{proposition}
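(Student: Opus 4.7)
The plan is to mimic the Itô-calculus argument of Proposition \ref{prop1}, but this time apply Itô's formula directly to $X(s)^{\top}P^{i}X(s)$ along the original learner dynamics \eqref{sdel} (without the $K_T$-centered reparameterization \eqref{sdelu}), and then use \eqref{q_algo1} to eliminate the Lyapunov generator in favor of $Q^{i+1}$, $K^{i+1}$, $R$, $\tilde{B}^{i}$, and $\tilde{D}^{i}$.

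First I would write $dX = (AX+Bu)\,ds + (CX+Du)\,dW$ and compute $d(X^{\top}P^{i}X)$. The drift contributes $X^{\top}(A^{\top}P^{i}+P^{i}A)X + 2u^{\top}B^{\top}P^{i}X$, while the quadratic variation term $(CX+Du)^{\top}P^{i}(CX+Du)$ expands as $X^{\top}C^{\top}P^{i}CX + 2u^{\top}D^{\top}P^{i}CX + u^{\top}\tilde{D}^{i}u$. Grouping the two cross terms in $u$ and $X$ into $2u^{\top}\tilde{B}^{i}X$ using $\tilde{B}^{i}=B^{\top}P^{i}+D^{\top}P^{i}C$ collapses the drift of $X^{\top}P^{i}X$ to
\[
X^{\top}\bigl(A^{\top}P^{i}+P^{i}A+C^{\top}P^{i}C\bigr)X + 2u^{\top}\tilde{B}^{i}X + u^{\top}\tilde{D}^{i}u,
\]
plus a $dW$-martingale term. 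Next I would invoke \eqref{q_algo1} to substitute
$A^{\top}P^{i}+P^{i}A+C^{\top}P^{i}C = (K^{i+1})^{\top}(R+\tilde{D}^{i})K^{i+1} - Q^{i+1}$, integrate from $t$ to $t+\Delta t$, and take the conditional expectation $\mathbb{E}^{\mathcal{F}_{t}}$. The stochastic integral against $W$ vanishes by the standard local-martingale/admissibility argument (since $u(\cdot)\in L^{2}_{\mathbb{F}}(\mathbb{R}^{m})$ and $P^{i}$ is constant), leaving $\mathbb{E}^{\mathcal{F}_{t}}[X(t+\Delta t)^{\top}P^{i}X(t+\Delta t)] - x^{\top}P^{i}x$ on one side. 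Moving the $Q^{i+1}$-quadratic to the left and the remaining terms to the right then reproduces \eqref{ito_int_Q} verbatim.

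The bulk of the work is the Itô expansion and the bookkeeping that collapses $B^{\top}P^{i}$ and $D^{\top}P^{i}C$ into the single block $\tilde{B}^{i}$; these are routine once the formula is written out. The only genuinely substantive step, and where I expect to spend the most care, is turning the one-sided implication into an \emph{equivalence}: I would argue that \eqref{q_algo1} assigns $Q^{i+1}$ uniquely given $P^{i}$, $K^{i+1}$, and $R$, so once the forward implication has been verified the characterization is an equivalence; alternatively, by exploiting the freedom to choose $x$ and sufficiently exciting controls $u(\cdot)$ one can recover the matrix identity pointwise from the quadratic integral identity \eqref{ito_int_Q}, giving a direct converse.
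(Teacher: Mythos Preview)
Your proposal is correct and is essentially the same argument as the paper's: the paper multiplies \eqref{q_algo1} by $X$, adds and subtracts the cross terms $2u^{\top}B^{\top}P^{i}X$, $2u^{\top}D^{\top}P^{i}CX$, $u^{\top}\tilde{D}^{i}u$ to produce the pointwise identity with $-(AX+Bu)^{\top}P^{i}X - X^{\top}P^{i}(AX+Bu) - (CX+Du)^{\top}P^{i}(CX+Du)$ on the right, and then integrates along \eqref{sdel} and takes $\mathbb{E}^{\mathcal{F}_t}$ --- i.e., exactly your It\^{o} expansion read in the other order (substitute first, then apply It\^{o}, rather than apply It\^{o} first, then substitute). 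Your remarks on the converse via uniqueness of $Q^{i+1}$ in \eqref{q_algo1} go slightly beyond what the paper writes, which only records the forward direction.
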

\begin{proof}
Multiplying both sides of \eqref{q_algo1} by X and adding and subtracting terms $2u^{\top}B^{\top}P^{i}X$, $2u^{\top}D^{\top}P^{i}CX$ and $u^{\top}D^{\top}P^{i}Du$, \eqref{q_algo1} can be rewritten as 

\be 
\label{xQx}
\begin{aligned}
X^{\top}Q^{i}X =& -\left(AX+Bu\right)^{\top}P^{i}X - X^{\top}P^{i}\left(AX+Bu\right) - \left(CX+Du\right)^{\top}P^{i}\left(CX+Du\right) \\ & +(K^{i+1}X)^{\top}\left(R+\tilde{D}^{i}\right)K^{i+1}X + 2u^{\top}\tilde{B}^{i}X + u^{\top}\tilde{D}^{i}u,
\end{aligned}
\ee
where $u$ can be generated by any stabilizing policy.

Substituting \eqref{sdel} into \eqref{xQx}, integrating both sides of \eqref{xQx} from $t$ to $t+\Delta t$ and taking conditional expectation $\mathbb{E}^{\mathcal{F}_{t}}$, we can get \eqref{ito_int_Q}.
\end{proof}

For $P \in \mathbb{S}^{n}$, we define an operator as follows:
\bex
  svec(P) =\begin{bmatrix}p_{11}, 2 p_{12}, \ldots, 2 p_{1 n}, p_{22}, 2 p_{23}, \ldots, 2 p_{n-1, n}, p_{n n}\end{bmatrix}^{\top} \in \mathbb{R}^{\frac{1}{2}n(n+1)}.
\eex

By \citep{murray2002adaptive, li2022stochastic}, there exists a matrix $\mathcal{T} \in \mathbb{R}^{n^{2} \times \frac{1}{2}n(n+1)}$ with rank($\mathcal{T}) = \frac{1}{2}n(n+1)$ such that $vec(P) = \mathcal{T}\times svec(P)$ for any $P \in \mathbb{S}^{n}$. Then, we define $\bar{L} = \left(L^{\top} \otimes L^{\top}\right)\mathcal{T} \in \mathbb{R}^{m^{2} \times \frac{1}{2}n(n+1)}$, which satisfies $L^{\top} \otimes L^{\top} vec(P) = \bar{L}\times svec(P)$ for $L \in \mathbb{R}^{n \times m}$. For example, for vector $x \in \mathbb{R}^{n}$, we have 
\bex
\bar{x} =\begin{bmatrix}x_{1}^{2}, x_{1} x_{2}, \ldots, x_{1} x_{n}, x_{2}^{2}, x_{2} x_{3}, \ldots, x_{n-1} x_{n}, x_{n}^{2}\end{bmatrix}^{\top} \in \mathbb{R}^{\frac{1}{2}n(n+1)}.
\eex

For finding the parameters of $P^{i}$, $\tilde{B}^{i}$ and $\tilde{D}^{i}$ in \eqref{ito_int}, through vectorization and Kronecker product theory, we define the following operators,
\bex 
\begin{aligned}
\delta_{xx}  &= \mathbb{E}^{\mathcal{F}_{t}}\left[\bar{X}(t+\Delta t)-\bar{X}(t),~\dots,~\bar{X}(t+l\Delta t)-\bar{X}(t+(l-1)\Delta t) \right]^{\top};\\ \delta_{uu}  &= \mathbb{E}^{\mathcal{F}_{t}}\left[\int_{t}^{t+ \Delta t}\bar{u}(s)ds,~\dots,~\int_{t+(l-1)\Delta t}^{t+l\Delta t}\bar{u}(s)ds \right]^{\top}; \\ I_{xx}  &= \mathbb{E}^{\mathcal{F}_{t}}\left[\int_{t}^{t+ \Delta t}X(s)^{\top} \otimes X(s)ds,~\dots,~\int_{t+(l-1)\Delta t}^{t+l\Delta t}X(s)^{\top} \otimes X(s)ds \right]^{\top}; \\ 
I_{xu}  &= \mathbb{E}^{\mathcal{F}_{t}}\left[\int_{t}^{t+ \Delta t}X(s)^{\top} \otimes u(s)ds,~\dots,~\int_{t+(l-1)\Delta t}^{t+l\Delta t}X(s)^{\top} \otimes u(s)ds \right]^{\top};\\
\Phi_{p} &= \left[\delta_{xx},~2I_{xx}\left(I_{n}\otimes K_{T}^{\top}\right)-2I_{xu},~I_{xx}\bar{K}_{T}-\delta_{uu}\right]; \\
\Psi_{p}^{i} &= -\mathbb{E}^{\mathcal{F}_{t}}\left[\int_{t}^{t+\Delta t}X(s)^{\top}\left(Q^{i}+K_{T}^{\top}RK_{T}\right)X(s)ds,~\dots,~\int_{t+(l-1)\Delta t}^{t+l\Delta t}X(s)^{\top}\left(Q^{i}+K_{T}^{\top}RK_{T}\right)X(s)ds\right]^{\top}
\end{aligned}
\eex
where $l$ is the group number of sampling data and should satisfy $l \geq \frac{n(n+1)}{2}+mn+\frac{m(m+1)}{2}$.
For any $K_{T}$ obtained by by \eqref{ls_KT2}, \eqref{ito_int} implies
\bex
\Phi_{p}\begin{bmatrix} svec\left(P^{i}\right) \\  vec\left(\tilde{B}^{i}\right)\\svec\left(\tilde{D}^{i} \right)\end{bmatrix} =\Psi_{p}^{i}.
\eex

Then, using least squares method, $P^{i}$, $\tilde{B}^{i}$ and $\tilde{D}^{i}$ can be solved by
\be
\label{ls_p}
\begin{bmatrix} svec\left(P^{i}\right) \\  vec\left(\tilde{B}^{i}\right)\\svec\left(\tilde{D}^{i} \right)\end{bmatrix} =\left(\Phi_{p}^{\top}\Phi_{p}\right)^{-1}\Phi_{p}^{\top}\Psi_{p}^{i}.
\ee

Similarly, for \eqref{ito_int_Q}, we define
\bex 
\begin{aligned}
\Phi_{q} &= Ixx;\\
\Psi_{q}^{i+1} &= -\delta_{xx}svec\left(P^{i}\right)+I_{xx}\bar{K}^{i+1}svec\left(R+\tilde{D}^{i}\right) + 2I_{xu}vec\left(\tilde{B}^{i}\right) + 2\delta_{uu}svec\left(\tilde{D}^{i}\right).
\end{aligned}
\eex
Then, $Q^{i+1}$ can be uniquely calculated by
\be 
\label{ls_q}
svec(Q^{i+1}) = \left(\Phi_{q}^{\top}\Phi_{q}\right)^{-1}\Phi_{q}^{\top}\Psi_{q}^{i}.
\ee
By using \eqref{ls_p} and \eqref{ls_q}, we solve $P^{i}$, $Q^{i+1}$ and $K^{i+1} = -\left(R+\tilde{D}^{i}\right)^{-1}\tilde{B}^{i}$ using a data-driven method.

Next, we show that, under some rank conditions, $\Phi_{p}$ and $\Phi_{q}$ have full column rank. The proof of the following lemma is similar to the proof of \citep[Lemma 6]{jiang2012computational}, so we omit it.

\begin{lemma}
\label{lem_rank}
If there exists $l_{o} >0$, for all $l \geq l_{o} $,

\begin{flalign*}
  &rank([I_{xx},I_{xu}, \delta_{uu}]) = \frac{n(n+1)}{2} + mn + \frac{m(m+1)}{2}, \\ &
  rank(I_{xx}) =  \frac{n(n+1)}{2},
\end{flalign*}
then, \eqref{ls_p} and \eqref{ls_q} solve unique solutions, respectively.
\end{lemma}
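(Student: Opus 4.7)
The plan is to deduce uniqueness of the solutions of \eqref{ls_p} and \eqref{ls_q} from full column rank of $\Phi_p$ and $\Phi_q$: once these hold, the normal matrices $\Phi_p^{\top}\Phi_p$ and $\Phi_q^{\top}\Phi_q$ are invertible and the pseudo-inverse formulas return the unique least-squares solutions. The claim for $\Phi_q = I_{xx}$ is immediate from the second rank hypothesis, so the substantive work is to show that $\Phi_p$ has full column rank. I would follow the standard kernel-argument strategy of \citep[Lemma 6]{jiang2012computational}, adapted to the stochastic setting to handle the additional diffusion terms involving $C$ and $D$.

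I will argue by contradiction: assume a nonzero block vector $y = [y_1^{\top}, y_2^{\top}, y_3^{\top}]^{\top}$ satisfies $\Phi_p y = 0$, and identify $y_1 = svec(Y_1)$, $y_2 = vec(M)$, $y_3 = svec(Y_3)$ with $Y_1 \in \mathbb{S}^n$, $M \in \mathbb{R}^{m \times n}$, and $Y_3 \in \mathbb{S}^m$. Applying It\^{o}'s formula to $X(s)^{\top} Y_1 X(s)$ along the learner dynamics \eqref{sdel} under the data-collection behaviour policy, taking conditional expectations, and stacking across the $l$ sampling intervals yields the key identity
\begin{equation*}
\delta_{xx}\,svec(Y_1) = I_{xx}\,svec(A^{\top}Y_1 + Y_1 A + C^{\top}Y_1 C) + 2 I_{xu}\,vec(B^{\top}Y_1 + D^{\top}Y_1 C) + \delta_{uu}\,svec(D^{\top}Y_1 D).
\end{equation*}
Substituting this into $\Phi_p y = 0$ and regrouping recasts the constraint as $I_{xx}\,\alpha + 2 I_{xu}\,\beta + \delta_{uu}\,\gamma = 0$, where $\alpha, \beta, \gamma$ are vector-valued linear expressions in $(Y_1, M, Y_3)$. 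The first rank hypothesis then forces $\alpha = \beta = \gamma = 0$ separately.

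The hard part will be closing the chain of implications after this decoupling. The identities $\beta = 0$ and $\gamma = 0$ should yield $M = B^{\top}Y_1 + D^{\top}Y_1 C$ and $Y_3 = D^{\top}Y_1 D$, and plugging these back into $\alpha = 0$ ought to collapse the remaining content into a homogeneous stochastic Lyapunov equation of the form
\begin{equation*}
Y_1 (A + B K_T) + (A + B K_T)^{\top} Y_1 + (C + D K_T)^{\top} Y_1 (C + D K_T) = 0.
\end{equation*}
Since $K_T$ mean-square stabilizes \eqref{sde} under Assumption \ref{ass_1}, this equation admits only the trivial solution $Y_1 = 0$ (a standard fact I would invoke from \citep[Theorem 3.2.3]{sun2020stochastic}), which in turn forces $M = 0$ and $Y_3 = 0$, contradicting $y \neq 0$ and establishing full column rank of $\Phi_p$.
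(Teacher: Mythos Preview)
Your proposal is correct and follows exactly the approach the paper has in mind: the paper omits the proof entirely, pointing to \citep[Lemma~6]{jiang2012computational}, and your kernel argument is precisely the stochastic adaptation of that lemma, with the extra diffusion terms $C,D$ producing the additional $D^{\top}Y_1C$ and $D^{\top}Y_1D$ contributions that ultimately collapse into the homogeneous closed-loop Lyapunov equation under $K_T$. One minor technical point worth tightening when you write it out: since $I_{xx}$ has $n^2$ columns but rank only $\tfrac{n(n+1)}{2}$, the rank hypothesis does not literally force $\alpha=0$ but only its symmetric part; this is harmless because the matrix $A^{\top}Y_1+Y_1A+C^{\top}Y_1C+K_T^{\top}M+M^{\top}K_T+K_T^{\top}Y_3K_T$ you need to kill is already symmetric.
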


Based on the above analysis, the model-free IRL algorithm is given in Algorithm \ref{algo2}.

\begin{breakablealgorithm}
  \label{algo2}
  \caption{Model-free IRL algorithm for the stochastic LQ problem.} 
  \renewcommand{\algorithmicrequire}{\textbf{Initialization:}}
\begin{algorithmic}[1]
  \Require Choose an arbitrary matrix $R >0$, an initial reward weight matrix $Q^{0}>0$, and compute $\hat{K}_{T}$ by \eqref{ls_KT2}, and collect system data generated by any stabilizing control input $u = K_{0}x+e$, where $e$ is the exploration noise.
  \State Set $i=0$.
  \Repeat
  \State (Policy correction) Update $P^{i}$ and control policy $K^{i+1}$ by \eqref{ls_p} and $K^{i+1} = -\left(R+\tilde{D}^{i}\right)^{-1}\tilde{B}^{i}$. 
  \State (Cost function weight construction) Update $Q^{i+1}$ by \eqref{ls_q}.
  \State $i \leftarrow i+1$
  \Until{$\Vert Q^{i} - Q^{i-1}\Vert \leq \epsilon_{1}$}.
\end{algorithmic}
\end{breakablealgorithm}

Finally, we prove the convergence of Algorithm \ref{algo2}.
\begin{theorem}
Under rank condition in Lemma \ref{lem_rank},  $\left\{P^{i}\right\}_{i=0}^{\infty}$, $\left\{Q^{i}\right\}_{i=0}^{\infty}$, $\left\{K^{i+1}\right\}_{i=0}^{\infty}$ converge to $P^{\ast}$, $Q^{\ast}$, $K^{\ast}$ that satisfy \eqref{riccth1} and \eqref{opti_conth1} in Algorithm \ref{algo2}.
\end{theorem}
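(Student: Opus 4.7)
The plan is to reduce convergence of the data-driven Algorithm \ref{algo2} to the already-established convergence of the model-based Algorithm \ref{algo1} (Theorem \ref{conv_th}) by showing that, under the rank hypothesis of Lemma \ref{lem_rank}, the two algorithms produce iterate-by-iterate identical sequences $\{P^{i}\}$, $\{Q^{i}\}$, $\{K^{i+1}\}$.

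First, I would invoke Propositions \ref{prop1} and \ref{prop2}. Along the learner trajectory \eqref{sdel} driven by the stabilizing exploration input $u = K_{0}x + e$, the model-based quantities $(P^{i},\tilde B^{i},\tilde D^{i})$ obtained from \eqref{lya_algo1}--\eqref{con_algo1} satisfy the It\^o integral identity \eqref{ito_int}, and $Q^{i+1}$ obtained from \eqref{q_algo1} satisfies \eqref{ito_int_Q}. Writing these identities on each of the $l$ sampling intervals and applying the $svec/vec/\otimes$ machinery introduced before \eqref{ls_p}, the model-based iterates must satisfy the stacked linear systems
\[
\Phi_{p}\begin{bmatrix} svec(P^{i}) \\ vec(\tilde B^{i}) \\ svec(\tilde D^{i}) \end{bmatrix} = \Psi_{p}^{i}, \qquad \Phi_{q}\,svec(Q^{i+1}) = \Psi_{q}^{i+1}.
\]

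Second, I would invoke Lemma \ref{lem_rank}: the stated rank conditions guarantee that $\Phi_{p}$ and $\Phi_{q}$ have full column rank, so each of the two systems admits a \emph{unique} solution. The least-squares formulas \eqref{ls_p} and \eqref{ls_q} used in Algorithm \ref{algo2} therefore return exactly this unique solution, which by the preceding step coincides with the model-based quantities $(P^{i},\tilde B^{i},\tilde D^{i},Q^{i+1})$. Because $K^{i+1} = -(R+\tilde D^{i})^{-1}\tilde B^{i}$ is defined identically in both algorithms, the two iterate sequences coincide term-by-term. Convergence to $P^{\ast}$, $Q^{\ast}$, $K^{\ast}$ satisfying \eqref{riccth1} and \eqref{opti_conth1} is then inherited verbatim from Theorem \ref{conv_th}.

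The main obstacle is the uniqueness/rank step, since it is precisely the bridge that forces the data-driven and model-based iterates to coincide; once Lemma \ref{lem_rank} is in hand the rest is essentially bookkeeping. A convenient observation that simplifies this step is that the data matrices $\delta_{xx}, \delta_{uu}, I_{xx}, I_{xu}$ entering $\Phi_{p}$ and $\Phi_{q}$ are collected \emph{once} at initialization and do not depend on the iteration index $i$, so the rank condition needs to be verified only a single time and then propagates unchanged through the entire policy iteration rather than having to be re-established after each update of $K^{i+1}$ and $Q^{i+1}$.
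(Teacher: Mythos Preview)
Your proposal is correct and is precisely the paper's own argument: under the rank condition of Lemma \ref{lem_rank} the least-squares systems \eqref{ls_p}--\eqref{ls_q} have unique solutions, so by Propositions \ref{prop1} and \ref{prop2} Algorithm \ref{algo2} reproduces the iterates of Algorithm \ref{algo1}, and convergence follows from Theorem \ref{conv_th}. Your write-up is in fact more detailed than the paper's proof (which is two sentences), and the closing observation that the data matrices are collected once and hence the rank condition need only be checked once is a useful clarification.
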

\begin{proof}
Under the rank condition in Lemma \ref{lem_rank}, \eqref{ls_p} and \eqref{ls_q} have unique solutions $P^{i}$, $\tilde{B}^{i}$, $\tilde{D}^{i}$ and $Q^{i+1}$. Hence, according to Proposition \ref{prop1} and Proposition \ref{prop2}, Algorithm \ref{algo2} is equivalent to Algorithm \ref{algo1}. 
\end{proof}

\section{Numerical experiments}
In this section, we will give a simple example to verify the effectiveness of Algorithm \ref{algo2}. The system matrices $A$, $B$, $C$ and $D$ are
\bex
\begin{aligned}
  & A = \begin{bmatrix} -1&	2\\2.2&1.7 \end{bmatrix},\quad B= \begin{bmatrix} 2\\1.6 \end{bmatrix}, \quad C = \begin{bmatrix} 0.1&0.2\\0.2&0.1 \end{bmatrix},\quad D=\begin{bmatrix} 0.2\\0.1 \end{bmatrix}.
\end{aligned}
\eex

The actual target cost weights $R_{T}$ and $Q_{T}$ of the target agent \eqref{sdel}, corresponding target SARE solution $P_{T}$ and the target control gain $K_{T}$ are
\bex
\begin{aligned}
  & Q_{T} = \begin{bmatrix} 5&	0\\0&5 \end{bmatrix}, \quad R_{T} = 1,
\end{aligned}
\eex
\bex
\begin{aligned}
  & P_{T} = \begin{bmatrix} 0.8944	&0.0526\\
    0.0526	&2.1919 \end{bmatrix}, \quad K_{T} = \begin{bmatrix}-1.8279 &	-3.4648\end{bmatrix}.
\end{aligned}
\eex

Now, we will solve this problem using the presented model-free IRL algorithm. The initial cost weights $R$ and $Q^{0}$ are
\bex
\begin{aligned}
  & R = 1, \quad Q^{0} = \begin{bmatrix}0.2 &	0 \\ 0 & 0.2\end{bmatrix},
\end{aligned}
\eex

Then, we use $u = K_{0}x+e$ as a control input to collect data in the time interval [0,1], where $K_{0} = [-1.2292 ~~ -2.1684]$, and to ensure that $\Phi_{p}$ and $\Phi_{q}$ have full column rank, we make the exploration signal $e = 2 \sum_{i=1}^{10}sin(w_{i}s)$, where $w_{i}$, with i =1,\dots,10, are randomly selected from [-500,500].

With the initial states $x = [10 ~~ -10]^{\top}$, Figure \ref{fig1}  illustrates the iterative process of $P^{i}$, $K^{i}$, and $Q^{i}$ with stopping criterion $\left\| K^{i+1} - K_T\right\|<0.01$, where $P^{i}$, $K^{i}$ and $Q^{i}$ converge to
\bex
\begin{aligned}
  &P^{\ast} = \begin{bmatrix}0.3844 & 0.6872\\ 0.6872& 1.4023\end{bmatrix}, \quad Q^{\ast} = \begin{bmatrix}1.2006 &2.3116 \\2.3116 & 5.1567\end{bmatrix}, \quad K^{\ast} = \begin{bmatrix}-1.8236 & -3.4558\end{bmatrix}.
\end{aligned}
\eex

\begin{figure}[!htp]
  \centering 
  \includegraphics[width=1\textwidth]{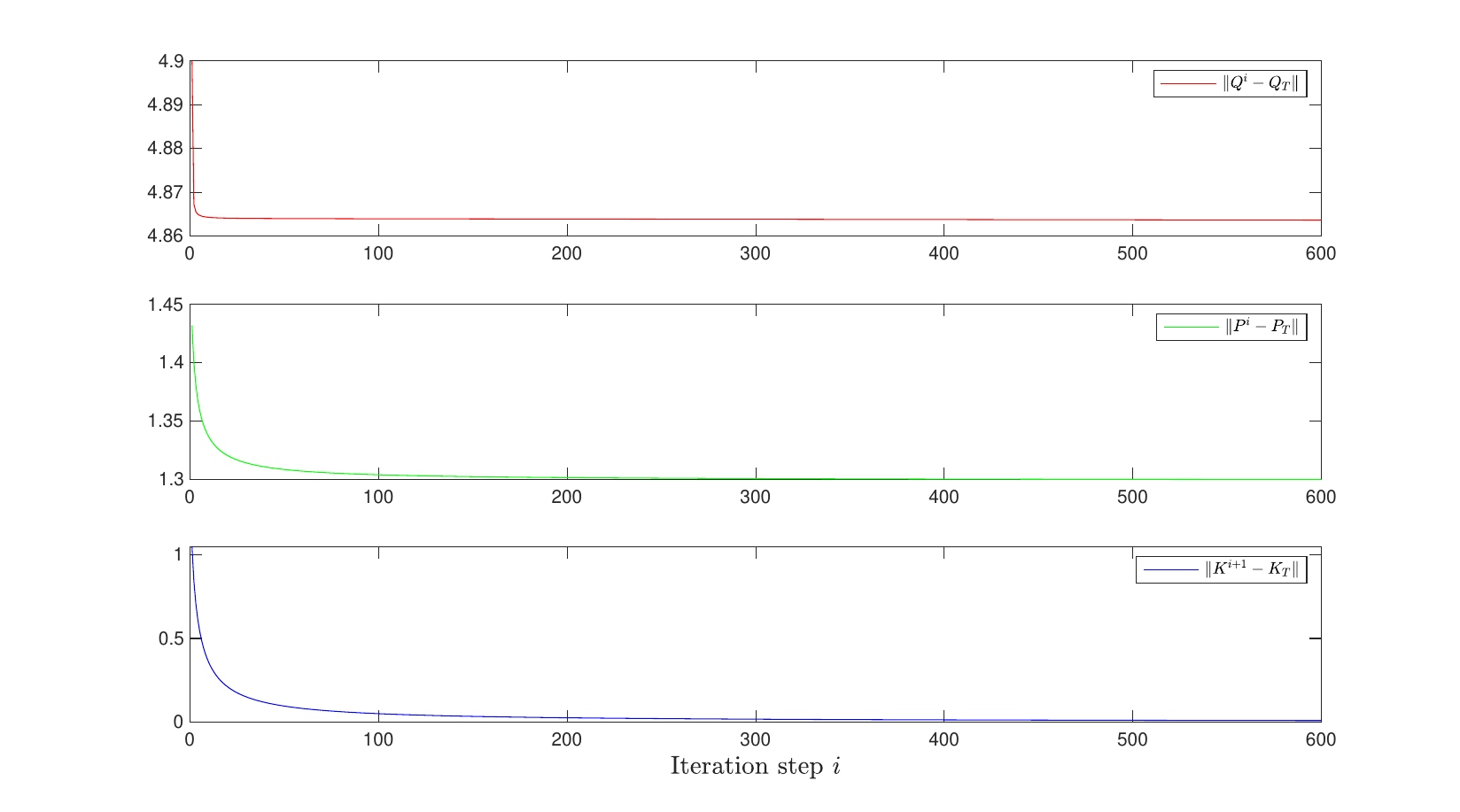}
  \caption{Convergence of $P^{i}$, $K^{i}$, and $Q^{i}$ using Algorithm \ref{algo2}.}
  \label{fig1} 
\end{figure}











\bibliographystyle{elsarticle-num-names} 
\bibliography{References}
\end{document}